\newtheorem{theorem}{Theorem}[section]
\newtheorem{corollary}[theorem]{Corollary}
\theoremstyle{definition}
\newtheorem{note}[theorem]{Note}
\newcommand{\be}{\begin{equation}}
\newcommand{\ee}{\end{equation}}
\newcommand{\ol}{\overline}
\newcommand{\R}{\mathbf{R}}
\newcommand{\C}{\mathcal{C}}
\newcommand{\G}{\mathcal{G}}
\renewcommand{\epsilon}{\varepsilon}
\DeclareFontFamily{U}{tipa}{}
\DeclareFontShape{U}{tipa}{m}{n}{<->tipa10}{}
\newcommand{\arc@char}{{\usefont{U}{tipa}{m}{n}\symbol{62}}}%
\newcommand{\arc}[1]{\mathpalette\arc@arc{#1}}
\newcommand{\arc@arc}[2]{%
  \sbox0{$\m@th#1#2$}%
  \vbox{
    \hbox{\resizebox{\wd0}{\height}{\arc@char}}
    \nointerlineskip
    \box0
  }%
}
\def\@tocline#1#2#3#4#5#6#7{\relax
  \ifnum #1>\c@tocdepth % then omit
  \else
    \par \addpenalty\@secpenalty\addvspace{#2}%
    \begingroup \hyphenpenalty\@M
    \@ifempty{#4}{%
      \@tempdima\csname r@tocindent\number#1\endcsname\relax
    }{%
      \@tempdima#4\relax
    }%
    \parindent\z@ \leftskip#3\relax \advance\leftskip\@tempdima\relax
    \rightskip\@pnumwidth plus4em \parfillskip-\@pnumwidth
    #5\leavevmode\hskip-\@tempdima
      \ifcase #1
       \or\or \hskip 1.3em \or \hskip 2em \else \hskip 5em \fi%
      #6\nobreak\relax
    \hfill\hbox to\@pnumwidth{\@tocpagenum{#7}}\par% <---- \dotfill -> \hfill
    \nobreak
    \endgroup
  \fi}
\newcommand{\nocontentsline}[3]{}
\newcommand{\tocless}[2]{\bgroup\let\addcontentsline=\nocontentsline#1{#2}\egroup}
\begin{document}
\setlength{\baselineskip}{1.2\baselineskip}

\title[Nonpositively curved manifolds with convex boundary] 
{Rigidity of nonpositively curved manifolds\\with convex boundary}

\author{Mohammad Ghomi}
\address{School of Mathematics, Georgia Institute of Technology,
Atlanta, GA 30332}
\email{ghomi@math.gatech.edu}
\urladdr{www.math.gatech.edu/~ghomi}

\author{Joel Spruck}
\address{Department of Mathematics, Johns Hopkins University,
 Baltimore, MD 21218}
\email{js@math.jhu.edu}
\urladdr{www.math.jhu.edu/~js}

%\vspace*{-0.75in}
\begin{abstract}
We show that a compact Riemannian $3$-manifold $M$ with  strictly convex simply connected boundary   and sectional curvature $K\leq a\leq 0$ is isometric to a convex domain in a complete simply connected space of constant curvature $a$, provided that $K\equiv a$ on planes tangent to the boundary of $M$. This yields a characterization of strictly convex surfaces with minimal total curvature in Cartan-Hadamard $3$-manifolds, and
extends some rigidity results of Greene-Wu, Gromov, and Schroeder-Strake. Our proof is based on a recent comparison formula for total curvature of Riemannian hypersurfaces, which also yields some dual  results for $K\geq a\geq 0$.
\end{abstract}

\date{\today \,(Last Typeset)}
\subjclass[2010]{Primary: 53C20, 58J05; Secondary: 53C42, 52A15.}
\keywords{Cartan-Hadamard manifold, Hyperbolic space, Gap theorem, Minimal total curvature, Asymptotic rigidity, Gauss-Kronecker curvature, Bounded sectional curvature.}
\thanks{The research of M.G. was supported by NSF grant DMS-2202337 and a Simons Fellowship. The research of J.S. was supported by a Simons Collaboration Grant}

\maketitle

%\tableofcontents

\section{Introduction}

A \emph{Cartan-Hadamard}  manifold $\mathcal{H}$ is a complete simply connected Riemannian $n$-space with sectional curvature $K\leq 0$. Greene and Wu \cite{greene-wu1982,greene-wu1982b} and Gromov \cite[Sec. 5]{ballmann-gromov-schroeder} showed  that, when $n\geq 3$, these spaces exhibit remarkable  rigidity properties, analogous to those  observed earlier by Mok, Siu, and Yau \cite{siu-yau1977,mok-siu-yau1981} in K\"{a}hler geometry. In particular, a fundamental result is that if $K$ vanishes outside a compact set $C\subset \mathcal{H}$, then $\mathcal{H}$ is isometric to Euclidean space $\R^n$. More generally, if $K\leq a\leq 0$ on $\mathcal{H}$, and $K\equiv a$ on $\mathcal{H}\setminus C$, then $K\equiv a$ on $\mathcal{H}$ \cite[p. 734]{greene-wu1982} \cite{seshadri2009}. We extend this result when $n=3$:

\begin{theorem}\label{thm:main}
Let $M$ be a compact Riemannian $3$-manifold with  nonempty $\C^2$ boundary $\partial M$ and sectional curvature $K\leq a\leq 0$. Suppose that $\partial M$ is strictly convex, each component of $\partial M$ is simply connected,  and $K\equiv a$ on planes tangent to $\partial M$. Then $M$ is isometric to a convex domain in a Cartan-Hadamard manifold of constant curvature $a$. In particular, $M$ is diffeomorphic to a ball.
\end{theorem}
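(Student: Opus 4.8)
The plan is to reduce Theorem~\ref{thm:main} to the single assertion that $K\equiv a$ on all of $M$, and then recover the ambient space form by a developing (unfolding) argument. Granting $K\equiv a$, the manifold $M$ has constant curvature $a$ and strictly convex $\C^2$ boundary, so its universal cover $\tilde M$ carries a developing local isometry $D\colon\tilde M\to\mathbf{H}_a$ into the complete simply connected $3$-manifold of curvature $a$ (a Cartan--Hadamard manifold, since $a\le 0$); strict convexity of $\partial\tilde M$ keeps $D$ from folding back, and an argument of the type standard for constant-curvature manifolds with convex boundary (cf.\ the reasoning of Greene--Wu and Schroeder--Strake, and the theory of convex sets in Cartan--Hadamard manifolds) shows that $D$ is an embedding onto a strictly convex domain, necessarily a ball. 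Since $\partial\tilde M$ is then a single sphere covering the simply connected components of $\partial M$, the covering $\tilde M\to M$ is one-sheeted along the boundary, which forces $\pi_1(M)=1$; hence $M=\tilde M$ is the desired convex domain, $\partial M$ is connected, and $M$ is diffeomorphic to a ball. Thus everything comes down to upgrading the hypothesis ``$K\equiv a$ on planes tangent to $\partial M$'' to ``$K\equiv a$ on $M$''.

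For this I would combine Gauss--Bonnet with the total curvature comparison formula. Each component $\Sigma$ of $\partial M$ is a closed simply connected surface, hence a $2$-sphere, and strict convexity makes its second fundamental form $\II$ positive definite, so $\det\II>0$ on $\Sigma$. The Gauss equation gives $\det\II=K_\Sigma-K_M(T\Sigma)$, where $K_M(T\Sigma)$ is the ambient sectional curvature along the tangent planes of $\Sigma$; hence by Gauss--Bonnet and the hypothesis $K_M(T\Sigma)\equiv a$,
\[
\int_\Sigma \det\II \;=\; 4\pi-\int_\Sigma K_M(T\Sigma) \;=\; 4\pi-a\,\vol(\Sigma),
\]
which is precisely the value this integral takes for a strictly convex sphere bounding a convex domain in $\mathbf{H}_a$. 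I would then invoke the comparison formula for total curvature of hypersurfaces applied to $\partial M\subset M$: it expresses $\int_{\partial M}\det\II$ as this model value plus a remainder of the form $\int_M (a-K)\,w\,dV$, where $w\ge 0$ is a Jacobian-type weight built from the inward normal exponential map of $\partial M$. The displayed identity forces the remainder to vanish; since $K\le a$ makes the integrand nonpositive, we obtain $(a-K)\,w\equiv 0$, i.e.\ $K\equiv a$ on the set $\{w>0\}$.

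The heart of the matter — and the step I expect to be the main obstacle — is to show that $\{w>0\}$ has full measure in $M$, equivalently that the inward normal exponential map of $\partial M$ sweeps out $M$ with nonvanishing Jacobian off a null set. Here strict convexity together with $K\le 0$ must be used to compare the focal distances and the shape operators of the inner parallel surfaces $\Sigma_t$ with their space-form counterparts, guaranteeing that $\Sigma_t$ remains an embedded $2$-sphere until the normal flow reaches its cut locus, and that this cut locus is a closed set of measure zero whose complement is covered by the flow. The delicate point is that, unlike geodesic rays in a Cartan--Hadamard manifold, the inward normals of a strictly convex surface always acquire focal points at finite distance, so the comparison integral and the weight $w$ must be organized around the first-focal/cut locus rather than an infinite half-line, and one has to verify that the comparison identity is not corrupted by this bookkeeping. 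Once $\{w>0\}$ is known to exhaust $M$ up to measure zero we get $K\equiv a$ on $M$, and conclude as in the first paragraph; the possibility that $\partial M$ has several components is then excluded retroactively, since a ball has connected boundary.
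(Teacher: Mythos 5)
There is a genuine gap, and it is exactly the one you flag as ``the heart of the matter'' without resolving: making the comparison formula work across the focal points and cut locus of the inward normal flow of $\partial M$. The paper never does this, and its whole architecture is designed to avoid it. In the paper, the comparison formula is applied only on a thin collar $\Omega_\epsilon$ between $\Gamma\subset\partial M$ and the nearby parallel surface $\Gamma_\epsilon$, where everything is $\C^2$ and strictly convex; this yields $K\equiv a$ \emph{near} $\partial M$ only. That local conclusion is then used to isometrically immerse the collar into the space form $\mathcal H$ of curvature $a$, embed $\Gamma$ via Alexander's theorem as the boundary of a convex body, and glue the exterior of that body to $M$, producing a complete manifold $\ol M$ which is shown (by a covering-space argument) to be Cartan--Hadamard with $K\equiv a$ outside $M$. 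The propagation of $K\equiv a$ into the interior is then done by shrinking \emph{geodesic spheres} $\partial B_r$ of $\ol M$, which are smooth, embedded and strictly convex at every radius by Hessian comparison (this is where $a\le 0$ and completeness of $\ol M$ are used), together with a continuity/infimum argument on $r$ that reapplies the collar step at the critical radius. Thus the cut-locus bookkeeping for the inner parallels of $\partial M$, which you correctly identify as the obstacle in your one-shot global sweep, is simply never needed; your proposal, by contrast, stands or falls on it and leaves it open.

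There is a second, more technical inaccuracy in your equality-case analysis. The remainder in the comparison formula is not of the form $\int_M (a-K)\,w$ with $K$ the full sectional curvature: it is $\int \big(\kappa^{t}_1K^{t}_{2}+\kappa^{t}_2K^{t}_{1}\big)$, where $K_i^t$ are the sectional curvatures of the \emph{radial} planes spanned by the normal of the level set and its principal directions, weighted by the principal curvatures $\kappa_i^t$. Vanishing of the equality defect therefore gives $K=a$ only on those radial planes (and only where $\kappa_i^t>0$), not on all planes. The paper closes this gap by noting that equality in its inequality \eqref{eq:G-Gt} also forces equality in the Gauss--Bonnet estimate \eqref{eq:Gt} for $\Gamma_{\epsilon'}$ at every level $\epsilon'\le\epsilon$, giving $K\equiv a$ on the \emph{tangent} planes of each parallel surface as well; then $K=a$ on a triplet of mutually orthogonal planes together with $K\le a$ forces $K\equiv a$ on all planes at those points. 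Your write-up skips this upgrade, so even granting your global sweep, the conclusion ``$K\equiv a$ on $\{w>0\}$'' does not follow as stated. Your final developing-map paragraph (assuming $K\equiv a$ on $M$) is fine in outline and close in spirit to the paper's Part II, but the reduction to $K\equiv a$ on $M$ is where your argument is incomplete.
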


\emph{Strictly convex} here means that the second fundamental form of $\partial M$ is positive definite with respect to the outward normal. For $n=3$, this theorem immediately implies the rigidity results mentioned above, by letting $M$ be a geodesic ball in $\mathcal{H}$ containing $C$. 
Schroeder and Strake \cite{schroeder-strake1989a} had established this result for $a=0$ (and only for $n=3$) refining earlier work of Schroeder and Ziller \cite{schroeder-ziller1990}. The simply connected assumption on components of $\partial M$ is necessary, as can be seen by considering a tubular neighborhood of a closed  geodesic in a hyperbolic manifold.  

As an application of Theorem \ref{thm:main} we obtain the following characterization for strictly convex surfaces with minimal total curvature. We say that an oriented closed (compact, connected, without boundary) hypersurface $\Gamma\subset\mathcal{H}$ is \emph{strictly convex} if its second fundamental form $\mathrm{I\!I}$ is positive definite. Then $\Gamma$ is embedded, bounds a convex domain, and is simply connected \cite{alexander1977}. The \emph{total Gauss-Kronecker curvature} of $\Gamma$ is given by $\mathcal{G}(\Gamma):=\int_\Gamma \det(\mathrm{I\!I})$, and $|\Gamma|$ denotes the area of $\Gamma$.

\begin{corollary}
Let $\mathcal{H}$ be a $3$-dimensional Cartan-Hadamard manifold with curvature $K\leq a\leq 0$, and $\Gamma\subset \mathcal{H}$ be a $\C^2$ closed strictly convex surface. Then
\be\label{eq:G1}
\mathcal{G}(\Gamma)\geq 4\pi -a|\Gamma|,
\ee
with equality only if $K\equiv a$ on the convex domain bounded by $\Gamma$.
\end{corollary}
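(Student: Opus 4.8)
The plan is to combine the Gauss equation with the Gauss--Bonnet theorem in order to rewrite $\mathcal{G}(\Gamma)$, which reduces \eqref{eq:G1} to the curvature bound $K\leq a$, and then to invoke Theorem \ref{thm:main} for the equality case.

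First recall the Gauss equation: at each point $p\in\Gamma$,
\[
K_\Gamma(p)=\det(\mathrm{I\!I})(p)+K\big(T_p\Gamma\big),
\]
where $K_\Gamma$ denotes the intrinsic Gauss curvature of $\Gamma$ with the induced metric and $K(T_p\Gamma)$ is the sectional curvature of $\mathcal{H}$ along the tangent plane $T_p\Gamma$. Since $\det(\mathrm{I\!I})$ is a $2\times 2$ determinant, it is independent of the choice of unit normal, so the right-hand side is well defined. As $\Gamma$ is strictly convex, it is embedded, bounds a convex domain $M$, and is simply connected; hence $\Gamma$ is diffeomorphic to $S^2$ and $\chi(\Gamma)=2$. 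Integrating the Gauss equation over $\Gamma$ and using Gauss--Bonnet, $\int_\Gamma K_\Gamma=2\pi\chi(\Gamma)=4\pi$, whence
\[
\mathcal{G}(\Gamma)=\int_\Gamma\det(\mathrm{I\!I})=4\pi-\int_\Gamma K\big(T_p\Gamma\big).
\]
Since $K\leq a$ on $\mathcal{H}$, the last integral is at most $a\,|\Gamma|$, which gives \eqref{eq:G1}.

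Suppose now that equality holds in \eqref{eq:G1}. Then $\int_\Gamma\big(a-K(T_p\Gamma)\big)=0$, and since the integrand is nonnegative we get $K(T_p\Gamma)=a$ for every $p\in\Gamma$; that is, $K\equiv a$ on all planes tangent to $\partial M=\Gamma$. Thus the convex domain $M$ is a compact Riemannian $3$-manifold whose nonempty $\mathcal{C}^2$ boundary is strictly convex and simply connected, with $K\leq a\leq 0$ and $K\equiv a$ on planes tangent to $\partial M$. By Theorem \ref{thm:main}, $M$ is isometric to a convex domain in a Cartan--Hadamard manifold of constant curvature $a$; in particular $K\equiv a$ on $M$, which is the asserted rigidity.

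There is no real obstacle in this argument: the corollary follows quickly from Theorem \ref{thm:main} once $\mathcal{G}(\Gamma)$ is re-expressed via Gauss--Bonnet. The only point needing a little care is to match sign and orientation conventions so that ``$\mathrm{I\!I}$ positive definite'' for $\Gamma$ corresponds to ``second fundamental form positive definite with respect to the outward normal'' of $M$ in Theorem \ref{thm:main}; this is routine, since $\det(\mathrm{I\!I})$ does not depend on the normal and $\Gamma$ genuinely bounds a convex body.
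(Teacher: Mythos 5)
Your proof is correct and follows essentially the same route as the paper: Gauss' equation plus Gauss--Bonnet to get $\mathcal{G}(\Gamma)=4\pi-\int_\Gamma K(T_p\Gamma)\geq 4\pi-a|\Gamma|$, with equality forcing $K\equiv a$ on tangent planes of $\Gamma$, and then Theorem \ref{thm:main} applied to the convex domain bounded by $\Gamma$. No issues.
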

\begin{proof}
By Gauss' equation $\det(\mathrm{I\!I}_p)=K_\Gamma(p)-K(T_p\Gamma)$ for all $p\in\Gamma$, where $K_\Gamma$ is the intrinsic curvature of $\Gamma$, and $T_p\Gamma$ is the tangent plane of $\Gamma$ at $p$.
Since $\Gamma$ is simply connected, $\int_\Gamma K_\Gamma=4\pi$ by Gauss-Bonnet theorem. Thus
\be\label{eq:G2}
\mathcal{G}(\Gamma)= 4\pi-\int_{p\in\Gamma} K(T_p\Gamma)\geq 4\pi -a |\Gamma|.
\ee
 If equality holds in \eqref{eq:G1}, then it also holds in \eqref{eq:G2}, which forces $K\equiv a$ on tangent planes of $\Gamma$. Theorem \ref{thm:main}, applied to the convex domain bounded by $\Gamma$, completes the proof.
\end{proof}

For $a=0$, the last result  is stated in \cite[p. 66]{ballmann-gromov-schroeder} and follows from \cite[Thm. 2]{schroeder-strake1989a}.
Gromov's approach  to the rigidity theorems mentioned above \cite[Sec. 5]{ballmann-gromov-schroeder}, which are further developed in \cite{schroeder-strake1989a,schroeder-ziller1990}, was based on extension of isometric embeddings in locally symmetric spaces. In most of these results the rank of the space is required to be bigger than $1$, which precludes negative  upper bounds for curvature. The arguments of Greene and Wu \cite{greene-wu1982} on the other hand involve volume comparison theory, which applies readily to various curvature bounds \cite[p. 734]{greene-wu1982}; see Seshadri \cite{seshadri2009}. Here we develop a different approach
via recent work on total curvature of Riemannian hypersurfaces \cite{ghomi-spruck2022,ghomi-spruck-TMCRH}, which 
also yields some results for the dual case $K\geq a\geq 0$; see Note \ref{note:nonnegative}. 
 Generalizing \eqref{eq:G1} to dimensions $n>3$ is an important open problem with applications to the isoperimetric inequality; see \cite{ghomi-spruck2022} for more references and background in this area.

\section{Proof of Theorem \ref{thm:main}}
The proof consists of three parts. First we use the comparison formula developed  in \cite{ghomi-spruck2022,ghomi-spruck-TMCRH} to show that $K\equiv a$ on a neighborhood of $\partial M$ (which we do not a priori assume to be connected). Then it follows that $M$ is isometric to a convex domain in a Cartan-Hadamard manifold $\ol M$, which has constant curvature $a$ outside $M$ (in particular $\partial M$ is connected). Finally enclosing $M$ in a  geodesic ball $B\subset\ol M$ and shrinking the radius of $B$ completes the proof via the first part of the argument. The first part also involves the Gauss-Bonnet theorem, which is why we need to assume that $n=3$. Other aspects of the proof work in all dimensions $n\geq 3$.

\subsection*{(Part I)}
Let $\Gamma$ be a component of $\partial M$, $d_\Gamma\colon M\to\R$ be the distance function of $\Gamma$, and $\Gamma_t:=d_\Gamma^{-1}(t)$ be the parallel surface of $\Gamma$ at distance $t$. Since $\Gamma$ is $\C^2$, there exists $\epsilon>0$ such that $\Gamma_t$ is $\C^2$ for $t\in [0,\epsilon]$, see \cite{foote1984}. In particular, for $t\in [0,\epsilon]$, the principal curvatures $\kappa_i^{t}$ of $\Gamma_t$ with respect to the outward normal $\nu_t$ are well-defined. By assumption, $\kappa_i^{0}>0$, which yields that $\kappa_i^{t}>0$, assuming $\epsilon$ is sufficiently small. Let $e_i^t$ be a choice of orthogonal principal directions for $\kappa_i^t$, and
$K_i^{t}$ be the sectional curvatures of $M$ for planes spanned by $\nu_{t}$  and $e_i^{t}$. Let $\Omega_\epsilon\subset M$ be the domain bounded in between $\Gamma$ and $\Gamma_\epsilon$, and recall that $\G(\Gamma_t)$ denote the total Gauss-Kronecker curvature of $\Gamma_t$, i.e., the integral of $\kappa_1^t\kappa_2^t$ over $\Gamma_t$. Since $|\nabla d_\Gamma|$ is constant on $\Gamma_t$, the comparison formula in \cite[Thm. 3.1]{ghomi-spruck-TMCRH} reduces to
$$
\G(\Gamma)-\G(\Gamma_{\epsilon})=-\int_{\Omega_\epsilon} \big(\kappa^{t}_1K^{t}_{2}+\kappa^{t}_2K^{t}_{1}\big).
$$
Let $H^t:=\kappa^t_1+\kappa^t_2$ denote the mean curvature of $\Gamma_{t}$. Since $\kappa_i^{t}\geq 0$, 
\be\label{eq:G-Gt}
\G(\Gamma)-\G(\Gamma_{\epsilon})\geq-a\int_{\Omega_\epsilon }H^t=-a\big(|\Gamma|-|\Gamma_{\epsilon}|\big).
\ee
The last equality  is due to Stokes theorem, since $H^t=\textup{div}(\nabla d_\Gamma)$ and $|\nabla d_\Gamma|=1$ (more formally, the above inequality holds on $\Omega_\epsilon\setminus\Omega_s$, for $0<s<\epsilon$, and we may take the limit as $s\to 0^+$). On the other hand, by Gauss' equation and Gauss-Bonnet theorem,
\begin{align}\label{eq:Gt}\notag
\G(\Gamma)&=\;4\pi-\int_{p\in\Gamma} K(T_p\Gamma)=4\pi-a|\Gamma|,\\
\G(\Gamma_{\epsilon})&=4\pi-\int_{p\in\Gamma_{\epsilon}}K(T_p\Gamma_\epsilon)\geq4\pi-a|\Gamma_{\epsilon}|.
\end{align}
Hence
$$
\G(\Gamma)-\G(\Gamma_{\epsilon})\leq -a\big(|\Gamma|-|\Gamma_{\epsilon}|\big).
$$
So equality holds in \eqref{eq:G-Gt} which forces $K_i^t\equiv a$ on $\Omega_\epsilon$ for $i=1$, $2$.  Furthermore, equality in \eqref{eq:G-Gt} implies equality in \eqref{eq:Gt}, which yields that $K\equiv a$ on tangent planes of $\Gamma_\epsilon$. So $K\equiv a$ on a triplet of mutually orthogonal planes at each point of $\Gamma_\epsilon$. It follows that $K\equiv a$ with respect to all planes with footprint on $\Gamma_\epsilon$, since $K\leq a$. As this argument  holds for all $\epsilon'\leq\epsilon$, we conclude that
$K\equiv a$ on $\Omega_\epsilon$.

\subsection*{(Part II)}
Let $\mathcal{H}$ be the $3$-dimensional Cartan-Hadamard manifold  of constant curvature $a$. Then $\Omega_\epsilon$ is locally isometric to $\mathcal{H}$. Furthermore, since $\Gamma$ is simply connected, so is $\Omega_\epsilon$. Thus there exists an isometric immersion $f\colon\Omega_\epsilon\to \mathcal{H}$, by a standard monodromy argument.
In particular, $f(\Gamma)$ forms a closed immersed surface in $\mathcal{H}$ with positive principal curvatures. Consequently, by a result of Alexander \cite[Thm. 1]{alexander1977}, see also \cite[Lem. 1]{schroeder-strake1989a}, $f$ 
embeds $\Gamma$ into the boundary of a convex domain $C\subset \mathcal{H}$.  Let $C'\subset \mathcal{H}$ be the closure of $\mathcal{H}\setminus C$. Using the diffeomorphism $f$ between $f(\Gamma)=\partial C'$ and $\Gamma$,  we may glue $C'$  to $M$ along $\Gamma$ to obtain a smooth manifold with one fewer boundary component. Repeating this procedure for each component $\Gamma$ of $\partial M$ yields an extension of $M$ to a complete manifold $\ol M$ of nonpositive curvature. Now pick a point $p\in\ol M$. By Cartan-Hadamard theorem, the exponential map $\exp_p\colon T_p \ol M\to \ol M$ is a covering. Let $X$ be a component of $\ol M\setminus M$. Note that $X$ is simply connected since, by Schoenflies theorem, it is homeomorphic to the complement of a ball in $\R^3$.  Let $X'$ be a component of $\exp_p^{-1}(X)$. Since $X$ is simply connected, $X'$ is homeomorphic to $X$. In particular $\partial X'$ is an embedded topological sphere. Thus $X'$ is the complement of a bounded set in $T_p \ol M$. Since any two such sets must intersect, it follows that $\ol M\setminus M$ is connected, and  $X'=\exp_p^{-1}(X)$. In particular $\ol M\setminus M=X$, which is simply connected. Consequently $\exp_p\colon X'\to X$ is one-to-one, which yields that it is one-to-one everywhere, since $\exp_p$ is a covering map. Hence $\ol M$ is simply connected, and therefore is a Cartan-Hadamard manifold. Finally, since $\ol M\setminus M$ is connected, $\partial M$ is connected. So $M$ forms a convex domain in $\ol M$ by Alexander's result \cite[Thm. 1]{alexander1977}.

\subsection*{(Part III)}
It remains to show that $K\equiv a$ on $\ol M$. By construction $K\equiv a$ on $\ol M\setminus M$. So we just need to check that $K\equiv a$ on $M$. Fix a point $o$ in $\ol M$ and let $B_r\subset\ol M$ be the geodesic ball of radius $r$ centered at $o$.  If $r$ is large enough, so that $M\subset B_r$, then $K\equiv a$ outside $B_r$. Let $r_0$ be the infimum of $r>0$ such that $K\equiv a$ on $\ol M\setminus B_r$. If $r_0=0$ we are done. Otherwise, since $a\leq 0$, $\partial B_{r_0}$ forms a strictly convex  surface by Hessian comparison (the principal curvatures of $\partial B_{r_0}$ are bigger than those of a sphere of the same radius in $\R^3$  \cite[1.7.3]{karcher1989}). Thus we may apply 
the result of Part I to $B_{r_0}$ to obtain that $K\equiv a$ on a neighborhood of $\partial B_{r_0}$ in $B_{r_0}$, which contradicts the definition of $r_0$. So we conclude that $K\equiv a$ everywhere, which completes the proof.

\section{Notes}

\begin{note}\label{note:nonnegative}
Part I of the proof of Theorem \ref{thm:main} works just as well for nonnegative curvature, i.e., suppose that $K\geq a\geq 0$ on $M$ and $K\equiv a$ on tangent planes of $\partial M$, then virtually the same argument shows that $K\equiv a$ on an open neighborhood of $\partial M$. Thus if $\partial M$ contracts to a point through strictly convex surfaces, then $K\equiv a$ on $M$ as we showed in Part III. This may be considered a dual version of Theorem \ref{thm:main}. For instance if $M$ is a geodesic ball of radius $r$ in a space with $K\leq b$, then it satisfies the contraction property provided that $r\leq \pi/(2\sqrt{b})$, by  Hessian comparison \cite[1.7.3]{karcher1989}. More generally, the required contraction may be achieved via a curvature flow when maximum value of $K$ is not too large compared to principal curvatures of $\partial M$  \cite{andrews1994}. Furthermore note that when  $a=0$, and $\partial M$ is simply connected, $M$ may be extended to a nonnegatively curved manifold $\ol M$ which is flat outside $M$, as discussed in Part II of the above proof. Then $\ol M$ is isometric to $\R^3$ by \cite[Thm. 1]{greene-wu1982}.  So $M$ will be flat, as had been observed earlier in \cite[p. 486]{schroeder-strake1989a}.  See \cite{schroeder-ziller1990,schroeder-strake1989b} for 
more rigidity results for nonnegative curvature 
\end{note}

\begin{note}
Once Part I of the  proof of Theorem \ref{thm:main} has been established, and it is known a priori that $M$ is simply connected with connected boundary $\Gamma$, one may complete the argument more directly by covering $M$ with a continuous family of strictly convex surfaces $\Gamma_t$ with $\Gamma_0=\Gamma$. For instance, we may let $\Gamma_t$ be  level sets of a strictly convex function on $M$, see \cite[Lem. 1]{borbely2002}. Alternatively, one may use harmonic mean curvature flow, i.e., set $\Gamma_t:=X(\Gamma,t)$ for $X\colon \Gamma\times[0,T)\to M$  given by
$$
\frac{\partial}{\partial t} X(p,t)=\frac{-1}{1/\kappa_1^t(p)+1/\kappa_2^t(p)}\nu_t(p),\quad\quad X(p,0)=p,
$$
where $\nu_t$ is the outward normal of $\Gamma_t$ and $\kappa_i^t$ are its principal curvatures.
Xu showed \cite{xu2010}, see also Gulliver and Xu \cite{gulliver-xu2009},   that $\Gamma_t$ converges to a point $o$ as $t\to T$, and  remains strictly convex throughout \cite[Prop. 19]{xu2010}. So $\Gamma_t$ always moves inward, foliating the region $M\setminus\{o\}$. The stated regularity requirement in \cite{xu2010} is that $\Gamma$ be $\C^\infty$, which we may assume is the case after a perturbation of $\Gamma$ \cite[Lem. 3.3]{ghomi-spruck-minkowski}, since by Part I we have $K\equiv a$ near $\Gamma$.
\end{note}

\section*{Acknowledgments}
We thank  Igor Belegradek and Joe Hoisington for useful communications.

\addtocontents{toc}{\protect\setcounter{tocdepth}{0}}%for hiding from table of contents
%\section*{Acknowledgments}

\addtocontents{toc}{\protect\setcounter{tocdepth}{1}}%to resume listing of the sections  
\bibliography{references}

\end{document}